\documentclass[11pt,reqno]{amsart}
\usepackage{amsmath, amssymb, amsthm}
\usepackage{url}
\usepackage[breaklinks]{hyperref}
\renewcommand{\(}{\left\(}
\renewcommand{\)}{\right\)}
\renewcommand{\[}{\left\[}
\renewcommand{\]}{\right\]}

\numberwithin{equation}{section}
 \theoremstyle{plain}
\newtheorem{theorem}{Theorem}[section]
\newtheorem{lemma}[theorem]{Lemma}
\newtheorem{remark}[]{Remark}

\newtheorem{corollary}[theorem]{Corollary}

   \makeatletter
\def\proof{\@ifnextchar[{\@oproof}{\@nproof}}
\def\@oproof[#1][#2]{\trivlist\item[\hskip\labelsep\textit{#2 Proof of\
#1.}~]\ignorespaces}
\def\@nproof{\trivlist\item[\hskip\labelsep\textit{Proof.}~]\ignorespaces}
\makeatother

\begin{document}
\title[A Combinatorial Proof of Partition Identities for the Number of Distinct Part Sizes ]{A Combinatorial Proof of Partition Identities for the Number of Distinct Part Sizes}

\thanks{$2020$ \textit{Mathematics Subject Classification.} Primary 11P84, 05A17; Secondary 11P81.\\
\textit{Keywords and phrases.}  Bressoud-Subbarao's identity,
Weighted partition identities}

\author{Archit Agarwal}
\address{Archit Agarwal, School of Mathematical Sciences, National Institute of Science Education Research, Bhubaneswar, Odisha, 752050, India.}
\email{archit.agrw@gmail.com, archit.agrw@niser.ac.in}

\author{Meghali Garg}
\address{Meghali Garg, Department of Mathematics, Indian Institute of Science, Bangalore, Karnataka, 560012, India.}
\email{meghaligarg.2216@gmail.com,
meghali1@iisc.ac.in}
\begin{abstract}
In 1984, Bressoud and Subbarao established an interesting weighted partition identity connecting the partition function and the divisor function. Recently, the first author, jointly with Bhoria, Eyyunni, and Maji, extended the work of Bressoud and Subbarao and obtained the following analogous weighted partition identity. For any $n \in \mathbb{N}$, they obtained
\begin{equation*}
p^{(2)}(n) = \sum_{\pi \in \mathcal{D}(n)} (-1)^{ \# (\pi) }s(\pi) \left( \ell(\pi) - s(\pi) \right).
\end{equation*}
In the present paper, we provide a  combinatorial proof of the above identity and extend the result for $p^{(k)}(n)$, where $p^{(k)}(n)$ denotes the number of partitions of $n$ into exactly $k$ distinct part sizes.
\end{abstract}
\maketitle
\section{Introduction}
Partition identities involving divisor functions have a long and fruitful history in the theory of partitions and $q$-series. A particularly interesting class of identities connects weighted sums over partitions into distinct parts with classical arithmetic functions such as the divisor function. Such identities often admit both analytic and combinatorial interpretations, and combinatorial proofs provide additional insight by revealing the cancellation mechanisms underlying the corresponding generating-function identities. In 1984, Bressoud and Subbarao \cite{BS84} found one such identity by comparing the coefficient of $q^n$ from an identity of Uchimura \cite[Theorem 2]{uchimura81}. Before discussing their identity, we introduce some notation that will be used throughout the paper.
\begin{itemize}
\item $\pi$: an integer partition,

\item $p(n)$:= the number of integer partitions of $n$,

\item $p^{(t)}(n)$:= the number of partitions of $n$ into exactly $t$ distinct part sizes,

\item $ s(\pi):=$ the smallest part of $\pi$,

\item $\ell(\pi):=$ the largest part of $\pi$,

\item $\#(\pi):=$ the number of parts of $\pi$,

\item $|\pi|:=$ size of partition $\pi$,
  
\item $\nu(\pi):=$ the number of parts of $\pi$ without multiplicity,

%\item $\nu(j):=$ the number of times the integer $j$ occurs in a partition,

%\item $\overline{p}(n):=$ the number of overpartitions of $n$,

\item $\mathcal{P}(n):=$ collection of all integer partitions of $n$,

\item $\mathcal{D}(n):=$ collection of all partitions of $n$ into distinct parts,

%\item $\mathcal{P}_{o}(n):=$ collection of all overpartitions of $n$,

%\item $\mathcal{P}^{*}(n):=$ partitions into consecutive integers with smallest part $1$.
\end{itemize}
%\begin{align}\label{uchimura}
%\sum_{n=1}^{\infty} n q^n (q^{n+1})_{\infty} = \sum_{n=1}^{\infty}\frac{(-1)^{n-1}q^{\frac{n(n+1)}{2}}}{(1-q^n)(q)_n} = \sum_{n=1}^{\infty}\frac{q^n}{1-q^n},
%\end{align}
%where $\displaystyle (a)_n:=(a;q)_n= \prod_{k=0}^{n-1}(1-aq^k)$
%A few years later, Bressoud and Subbarao \cite{5} provided a beautiful partition-theoretic interpretation of this identity. We describe their result next. Before doing so, we introduce the notation that will be used throughout the paper.
Let $d(n)$ count the number of positive divisors of $n$. Bressoud and Subbarao \cite{BS84} proved  for any natural number $n$,
\begin{equation}\label{BS}
\sum_{ \pi \in \mathcal{D}(n)  } (-1)^{ \# (\pi)-1 } s(\pi)=d(n).  
\end{equation}
They proved this identity using a combinatorial argument. Later in 1995, Fokkink, Fokkink and Wang \cite{ffw95} also rediscovered this identity. Bressoud and Subbarao not only proved \eqref{BS} but also gave a more general result involving generalized divisor function. Mainly, they proved for any non-negative integer $z$, 
\begin{equation}\label{BS-general}
\sum_{ \pi \in \mathcal{D}(n)  } (-1)^{ \# (\pi)-1 } \sum_{j=1}^{s(\pi)} 
( \ell(\pi) - s(\pi) + j ) ^z = \sigma_z(n),
\end{equation}
where $\sigma_{z}(n)=\sum_{d|n} d^z$. Later in 2022, Bhoria, Eyyunni and Maji \cite[Theorem 2.4]{BEM21} derived the following one-variable generalization of  \eqref{BS-general}. For any $z \in \mathbb{Z}$, $c \in \mathbb{C}$ and $n \in \mathbb{N}$, they proved that
\begin{equation}\label{One_Var_BS-general}
\sum_{ \pi \in \mathcal{D}(n)  } (-1)^{ \# (\pi)-1 } \sum_{j=1}^{s(\pi)} (\ell(\pi) - s(\pi)  + j ) ^z c^{\ell(\pi) - s(\pi) +j}=  \sigma_{z,c}(n),  
\end{equation}
where 
\begin{align*}
\sigma_{z,c}(n)=  \sum_{d | n} d^z c^d.
\end{align*}
Recently, the first author along with Bhoria, Eyyunni and Maji,  \cite[Theorem 2.1]{ABEM24} extended the domain of $z$ in \eqref{One_Var_BS-general} from integers to the set of complex numbers. In the same paper, they observed that the divisor function $d(n)$ can also be written as $p^{(1)}(n)$, where $p^{(1)}(n)$ counts the number of partitions of $n$ into exactly $1$ distinct part size, and rewrote \eqref{BS} as
\begin{align}\label{different form_BS}
p^{(1)}(n)=  \sum_{ \pi \in \mathcal{D}(n)  } (-1)^{ \# (\pi)-1 } s(\pi).  
\end{align}
Further, they \cite[Corollary 2.7]{ABEM24} gave an analogous result to \eqref{different form_BS}. We write their result as a corollary here as we intend to give a combinatorial proof of their result.
\begin{corollary}\label{main_corollary}
For any natural number $n$, one has
\begin{align}\label{Analogue of BS equation}
p^{(2)}(n) = \sum_{\pi \in \mathcal{D}(n)} (-1)^{ \# (\pi) }s(\pi) \left( \ell(\pi) - s(\pi) \right). 
\end{align}
\end{corollary}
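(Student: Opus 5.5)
The plan is to prove \eqref{Analogue of BS equation} by a sign-reversing involution on weighted objects. Fixed points of the involution will correspond to partitions of $n$ into exactly two part sizes. This mirrors the classical combinatorial proof of \eqref{BS} (equivalently \eqref{different form_BS}). I have not checked all details of the involution, so the plan below should be read as a proposal.

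\textbf{Step 1: weighted objects.} Write $\pi\in\mathcal{D}(n)$ as $\ell=a_1>a_2>\cdots>a_m=s$. Since $\ell-s=0$ when $m=1$, only $m\ge 2$ contributes. The weight $s(\ell-s)$ counts pairs $(j,i)$ with $1\le j\le s$ and $1\le i\le \ell-s$. So the right side of \eqref{Analogue of BS equation} is a signed count of triples $(\pi,j,i)$ with sign $(-1)^{m}$. In generating-function form this reads
\begin{equation*}
\sum_{1\le s<\ell} s(\ell-s)\, q^{s+\ell}\prod_{k=s+1}^{\ell-1}(1-q^k),
\end{equation*}
because the middle parts form an arbitrary subset of $(s,\ell)$ and each contributes a factor $-q^k$. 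The target is
\begin{equation*}
\sum_{b<a}\frac{q^{a+b}}{(1-q^a)(1-q^b)},
\end{equation*}
which counts partitions $a^{\mu}b^{\nu}$ with $\mu,\nu\ge1$.

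\textbf{Step 2: the involution on the middle parts.} I would use the marked integer $i$ to split the interval $(s,\ell)$. The marks $j\in[1,s]$ and $i\in[1,\ell-s]$ are to be read as the data $(b,\nu)$ and $(a,\mu)$ after a Bressoud--Subbarao-type dissection: roughly, $s=b\nu'$-type data and the gap $\ell-s$ encodes $a-b$. The involution would look at the largest middle part $a_2$ and compare it with $s+i$. If $a_2>s+i$, move $a_2$ into the largest part by replacing $(\ell,a_2)$ with $(\ell+a_2)$, keeping the weight data suitably shifted. Otherwise, split the largest part. Each such move changes $m$ by one, so it reverses the sign. I would then verify, as in Franklin's proof of the pentagonal number theorem, that the move is well defined and involutive except on a small exceptional family.

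\textbf{Step 3: fixed points.} I would identify the exceptional (unmatched) triples. These should be those whose middle parts form a run of consecutive integers determined by $(j,i)$, so that $\pi$ is a "staircase" between $s$ and $\ell$. I would then exhibit an explicit bijection between these fixed triples and the partitions $a^{\mu}b^{\nu}$, $a>b$. This is done by reading $a$, $b$, and the multiplicities from $s$, $j$, $i$, and the length of the run, in the spirit of the bijection $d(n)\leftrightarrow$ staircases in the proof of \eqref{different form_BS}. As a sanity check, I would confirm the identity for small $n$ before committing to the exact rule, since the generating function in Step 1 gives the coefficients directly.

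\textbf{Main obstacle.} The main difficulty is designing the rule in Step 2 so that it is simultaneously weight-compatible (the ranges $1\le j\le s$ and $1\le i\le\ell-s$ must be preserved after $s$ or $\ell$ change) and leaves exactly $p^{(2)}(n)$ fixed points with sign $+1$. My first attempt would be a fallback. Since $\sum_{\pi\in\mathcal{D}}(-1)^{\#(\pi)-1}s(\pi)\,[\ell(\pi)=L]$ is handled by the known proof of \eqref{BS}, I would write $\ell-s=\sum_{t=s+1}^{\ell}1$. I would then apply that involution to the part of $\pi$ lying below the threshold $t$, reducing \eqref{Analogue of BS equation} to iterated uses of \eqref{different form_BS}, one for each part size.
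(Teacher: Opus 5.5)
Your proposal is a plan, not a proof, and the missing piece is the whole content of the statement. The sign-reversing involution of Step 2, its fixed-point set, and the bijection of Step 3 are never defined; the ``suitably shifted'' weight data is precisely what would have to be built.

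\textbf{Step 2 fails at $n=6$.} The only negatively signed triples are $(\{3,2,1\},j,i)$ with $j=1$, $i\in\{1,2\}$. They must cancel against triples coming from $\{5,1\}$ or $\{4,2\}$, since the signed count is $8-2=6=p^{(2)}(6)$. Your test $a_2>s+i$ reads $2>1+i$, which fails for both values of $i$. So you are told to split the largest part $3$, which is impossible in a partition into distinct parts already containing $2$ and $1$. Both negative triples are left unmatched.

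\textbf{The merge move is not involutive.} After replacing $(\ell,a_2)$ by $\ell+a_2$, the new largest middle part $a_3$ can still exceed $s+i$. The rule then merges again instead of splitting back: with $i=1$, $\{6,5,4,1\}\to\{11,4,1\}\to\{15,1\}$.

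\textbf{Your guess about the fixed points is wrong.} For $n=5$ there are no negative triples, so all five triples from $\{4,1\}$ and $\{3,2\}$ are fixed. But $\{4,1\}$ is not a staircase.

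\textbf{The fallback fails too.} The part of $\pi$ below the threshold $t$ is a distinct partition with largest part $<t$. The Bressoud--Subbarao involution does not preserve that bound, and the bounded sum is not a divisor count. For size $3$ and $t=3$ only $\{2,1\}$ qualifies, giving $-1\neq d(3)=2$.

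\textbf{The idea you are missing} is the one the paper uses to avoid an involution on marked triples altogether. Write $s(\ell-s)=\sum_{j=1}^{s}(\ell-s+j)-\binom{s+1}{2}$.
\begin{itemize}
\item Weighted by $(-1)^{\#(\pi)-1}$, the first piece sums to $\sigma(n)$ by \eqref{BS-general} with $z=1$.
\item The second piece depends only on $s(\pi)$. The refined identity of Lemma~\ref{lemma 1} turns $\sum_{\pi}(-1)^{\#(\pi)-1}w(s(\pi))$ into $\sum_{\lambda\in\mathcal P(n)}\sum_{j}(-1)^j\binom{\nu(\lambda)}{j}w(\ell(\lambda)-j)$. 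That lemma is proved by an involution on pairs consisting of a partition and a subset of its part sizes.
\item For $w(m)=\binom{m+1}{2}$ the inner sum is a second finite difference of a quadratic. It vanishes when $\nu(\lambda)\ge 3$, equals $1$ when $\nu(\lambda)=2$, and equals $\ell(\lambda)$ when $\nu(\lambda)=1$. So the total is $p^{(2)}(n)+\sigma(n)$.
\end{itemize}
The signed sum in the corollary is the second evaluation minus the first, namely $p^{(2)}(n)+\sigma(n)-\sigma(n)=p^{(2)}(n)$.

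The restriction to two part sizes thus comes from the vanishing of higher finite differences, not from fixed points of an involution on $(\pi,j,i)$. A purely involutive proof would have to combine the Lemma~\ref{lemma 1} involution with the Bressoud--Subbarao one, and your Step 2 does not do this.
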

Although the above result was established analytically, a Bressoud-Subbarao-type combinatorial proof of the identity \eqref{Analogue of BS equation} has not yet been given. In this paper, we fill this gap by providing a combinatorial proof of the identity \eqref{Analogue of BS equation}. We further generalize \eqref{Analogue of BS equation} for partitions into $k$ distinct part sizes. Mainly, we prove the following result.
\begin{theorem}\label{general_pk(n)}
 Let $p^{(k)}(n)$ be the number of partitions of $n$ into exactly $k$ distinct part sizes. Then, we have 
\begin{align}\label{general k identity}
p^{(k)}(n) = \sum_{\pi \in \mathcal{D}(n)} (-1)^{\#(\pi)-1} \binom{s(\pi)+k-1}{k} - \sum_{\substack{\lambda \in \mathcal{P}(n)\\ \nu(\lambda)\le k-1}} \binom{\ell(\lambda)+k-\nu(\lambda)-1} {k-\nu(\lambda)}.
\end{align}
\end{theorem}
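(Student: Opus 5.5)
The plan is to move the term with $\nu(\lambda)\le k-1$ to the left-hand side. Since for $\nu(\lambda)=k$ one has $\binom{\ell(\lambda)+k-\nu(\lambda)-1}{k-\nu(\lambda)}=\binom{\ell(\lambda)-1}{0}=1$, the identity \eqref{general k identity} is equivalent to
\begin{equation*}
F_k(n):=\sum_{\pi\in\mathcal{D}(n)}(-1)^{\#(\pi)-1}\binom{s(\pi)+k-1}{k}
=\sum_{\substack{\lambda\in\mathcal{P}(n)\\ \nu(\lambda)\le k}}\binom{\ell(\lambda)+k-\nu(\lambda)-1}{k-\nu(\lambda)}=:G_k(n).
\end{equation*}
The binomial $\binom{\ell+k-\nu-1}{k-\nu}$ counts multisets of size $k-\nu(\lambda)$ drawn from $\{1,\dots,\ell(\lambda)\}$. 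So $G_k(n)$ counts pairs $(\lambda,M)$ with $\lambda\vdash n$. Equivalently, it counts pairs $(\lambda,T)$ where $T$ is a multiset of size $k$ with maximum $\ell(\lambda)$ and $T\supseteq\operatorname{supp}(\lambda)$, obtained by adjoining the part sizes of $\lambda$ to $M$. Likewise, $\binom{s+k-1}{k}$ counts chains $1\le a_1\le\dots\le a_k\le s(\pi)$, so $F_k(n)$ is a signed count of pairs $(\pi,\mathbf a)$. The goal is a sign-reversing involution on the pairs $(\pi,\mathbf a)$ whose fixed points, all of positive sign, are in bijection with the pairs $(\lambda,T)$. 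This generalizes the Bressoud--Subbarao argument for \eqref{BS}, which is the case $k=1$.

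To guide the construction, I will first check the identity at the level of generating functions, and I will use the steps of that computation as the blueprint. Grouping $\pi$ by its smallest part $m$ gives $\sum (-1)^{\#(\pi)-1}q^{|\pi|}=q^m(q^{m+1})_\infty=(q^{m+1})_\infty-(q^m)_\infty$. Summation by parts, using $\binom{m+k-1}{k}-\binom{m+k-2}{k}=\binom{m+k-2}{k-1}$, then gives
\begin{equation*}
\sum_{n\ge1}F_k(n)q^n=\sum_{m\ge1}\binom{m+k-2}{k-1}\bigl(1-(q^m)_\infty\bigr).
\end{equation*}
Combinatorially, the step $q^m(q^{m+1})_\infty=(q^{m+1})_\infty-(q^m)_\infty$ says that deleting or inserting the part $m$ cancels distinct partitions. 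The summation by parts says that the chain $\mathbf a$ can be traded for a chain of length $k-1$ whose top entry records a threshold $m$, namely $a_k$. Concretely, the first involution will act on pairs $(\pi,\mathbf a)$ as follows: if $\pi$ has a part in $[a_k,s(\pi))$, toggle it; otherwise cancel within the family of distinct partitions with all parts $\ge a_k$. The survivors are triples consisting of a chain $(a_1,\dots,a_{k-1},m)$ and a distinct partition with all parts $\ge m$, carrying the opposite sign. Their total is exactly the right-hand side above.

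Next, I will write $1-(q^m)_\infty=1-(q)_\infty/(q)_{m-1}$. The factor $1/(q)_{m-1}$ generates partitions with largest part $\le m-1$. I will then match $\binom{m+k-2}{k-1}$ against the multiset $T$: the $k-1$ free entries together with the threshold $m$ should become the multiset $T$ of the pair $(\lambda,T)$, with $\ell(\lambda)=\max T$. This requires a second involution. On pairs (distinct partition with parts $\ge m$, partition with parts $<m$), it should cancel everything except genuine partitions $\lambda$ whose part sizes all lie in $T$. I would try Franklin-type cancellation applied to the parts of size $\ge m$: remove a part of size $j$ from the distinct partition, or add one, according to whether $j$ already occurs.

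The main obstacle is this second step. The pieces must be fitted so that the condition $\operatorname{supp}(\lambda)\subseteq T$ and the equality $\ell(\lambda)=\max T$ emerge exactly, rather than the weaker constraint ``largest part $\le m$''. Before attempting the general construction, I would first verify the bookkeeping for $k=2$, where the identity must reduce to Corollary \ref{main_corollary}, and check it numerically for small $n$. If the bijective route stalls, a fallback is to finish analytically. In that case I would prove $\sum_n G_k(n)q^n=\sum_m\binom{m+k-2}{k-1}(1-(q^m)_\infty)$ by comparing coefficients of $z^k$ in the two-variable generating function $\prod_{j}\bigl(1+zq^j/(1-q^j)\bigr)$ for $\nu(\lambda)$, suitably weighted by the largest part.
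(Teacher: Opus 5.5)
\textbf{There is a genuine gap: the step that carries all of the content is the one you leave open.}

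Your reformulation $F_k(n)=G_k(n)$ is correct; it is exactly the identity the paper proves before isolating the $\nu(\lambda)=k$ term. Your formula $\sum_n F_k(n)q^n=\sum_{m\ge1}\binom{m+k-2}{k-1}\bigl(1-(q^m;q)_\infty\bigr)$ is also correct. That step, however, is pure bookkeeping and needs no involution. Grouping the pairs $(\pi,\mathbf a)$ by $t=a_k$ gives it at once, because the signed count of nonempty distinct partitions with all parts $\ge t$ is $1-(q^t;q)_\infty$. (Your rule ``if $\pi$ has a part in $[a_k,s(\pi))$, toggle it'' is vacuous anyway, since $\pi$ has no parts below $s(\pi)$.)

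What is missing is a way to relate $1-(q^m;q)_\infty$ to ordinary partitions weighted by $\nu$ and $\ell$. Concretely, you need
\begin{equation*}
\sum_{\substack{\pi\in\mathcal D(n)\\ s(\pi)\ge m}}(-1)^{\#(\pi)-1}=\sum_{\lambda\in\mathcal P(n)}(-1)^{\ell(\lambda)-m}\binom{\nu(\lambda)-1}{\ell(\lambda)-m}.
\end{equation*}
After that, the evaluation $\sum_j(-1)^j\binom{\nu-1}{j}\binom{\ell-j+k-2}{k-1}=\binom{\ell+k-\nu-1}{k-\nu}$ (with $\nu=\nu(\lambda)$, $\ell=\ell(\lambda)$) finishes the argument. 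Neither of your suggestions supplies this identity:
\begin{itemize}
\item The ``Franklin-type cancellation'' is never specified, and nothing in it forces $\operatorname{supp}(\lambda)\subseteq T$ with $\max T=\ell(\lambda)$.
\item The pairs you describe (distinct parts $\ge m$, ordinary parts $<m$) do not match your own factorisation $(q;q)_\infty/(q;q)_{m-1}$, where the distinct partition ranges over all part sizes.
\item The analytic fallback only restates the problem. Write $\binom{\ell+k-\nu-1}{k-\nu}=[z^k]\,z^{\nu}(1-z)^{-\ell}$ and set $c=(1-z)^{-1}$. The equality you want is then precisely the $z^k$-coefficient of the Andrews--Garvan--Liang identity
\begin{equation*}
\sum_{\pi\in\mathcal D(n)}(-1)^{\#(\pi)-1}\bigl(c^{s(\pi)}-1\bigr)=\sum_{\lambda\in\mathcal P(n)}c^{\ell(\lambda)-\nu(\lambda)}(c-1)^{\nu(\lambda)}.
\end{equation*}
This is a genuine $q$-series theorem, not a routine coefficient comparison, and you give no argument for it.
\end{itemize}

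\textbf{How the paper fills this.} The missing ingredient is Lemma~\ref{lemma 1}:
\begin{equation*}
\sum_{\substack{\pi\in\mathcal D(n)\\ s(\pi)=m}}(-1)^{\#(\pi)-1}=\sum_{\lambda\in\mathcal P(n)}(-1)^{\ell(\lambda)-m}\binom{\nu(\lambda)}{\ell(\lambda)-m}.
\end{equation*}
It is proved by an explicit involution. Conjugate $\lambda$, choose a set $B$ of $\#(\mu)-m$ distinct part sizes and delete one copy of each. Conjugate again and delete one part $m$, leaving $\rho$ with parts $\le m$. Split $B=S\sqcup T$ at $m$ and toggle the smallest element of $[1,m]$ between $S$ and $\rho$; the survivors are the partitions $\{m\}\cup T$.

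The theorem then follows by these steps:
\begin{itemize}
\item multiply by $\binom{m+k-1}{k}$ and sum over $m$;
\item discard all $\lambda$ with $\nu(\lambda)>k$, since a finite difference of order $\nu(\lambda)$ of a degree-$k$ polynomial vanishes;
\item evaluate the remaining terms with $\sum_j(-1)^j\binom{v}{j}\binom{y-j}{k}=\binom{y-v}{k-v}$.
\end{itemize}
Summing Lemma~\ref{lemma 1} over $s(\pi)\ge m$ gives exactly the tail identity you need above. With it inserted, your outline becomes a complete proof; without it, your outline does not prove the theorem.
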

\begin{remark}
For $k=1$, the sum over $\lambda$ is empty and Theorem \ref{general_pk(n)} reduces to \eqref{different form_BS}.
For $k=2$, we recover identity \eqref{Analogue of BS equation}.
\end{remark}

\section{Combinatorial Proofs}
We begin this section with a bijective proof of the following result, which serves as a key ingredient in establishing our main results.

\begin{lemma}\label{lemma 1}
For any $m,n \in \mathbb{N}$, we obtain
\begin{align}\label{lemma 1 equation}
\sum_{\substack{\pi\in\mathcal D(n)\\ s(\pi)=m}}(-1)^{ \# (\pi) -1}
=\sum_{\lambda\in\mathcal P(n)}(-1)^{\ell(\lambda)-m}\binom{\nu(\lambda)}{\ell(\lambda)-m}.
\end{align}
\end{lemma}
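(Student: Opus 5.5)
The plan is to transform the right-hand side of \eqref{lemma 1 equation} in two steps, first by conjugation and then by decomposing each partition, until it becomes a signed sum over pairs of partitions. A sign-reversing involution on these pairs should then leave exactly the partitions counted on the left.

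\emph{Step 1 (conjugation).} Let $\lambda'$ be the conjugate of $\lambda$. Then $\ell(\lambda)=\#(\lambda')$. Also $\nu(\lambda)=\nu(\lambda')$, since both equal the number of outer corners of the Ferrers diagram. So the right-hand side equals
\begin{equation*}
\sum_{\mu\in\mathcal P(n)}(-1)^{\#(\mu)-m}\binom{\nu(\mu)}{\#(\mu)-m}.
\end{equation*}
We read $\binom{\nu(\mu)}{\#(\mu)-m}$ as the number of sets $S$ consisting of exactly $\#(\mu)-m$ distinct part sizes of $\mu$. Given such an $S$, delete one copy of each size in $S$ from $\mu$, leaving a partition $\rho$. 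This gives a bijection between the pairs $(\mu,S)$ and the pairs $(\rho,S)$ in which $\rho$ is any partition with exactly $m$ parts and $S$ is any set of distinct positive integers with $|\rho|+\sum S=n$. The inverse map sets $\mu=\rho\cup S$, and every element of $S$ is automatically a part size of $\mu$. The weight of the pair is $(-1)^{|S|}$.

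\emph{Step 2 (conjugate again).} Replace $\rho$ by its conjugate $\tau$, which has largest part exactly $m$. Write $\tau=(m)\cup\tau_0$, where $\tau_0$ is an arbitrary partition with all parts $\le m$. So the right-hand side counts triples $(\tau_0,S)$ of total size $n-m$, with weight $(-1)^{|S|}$. In generating-function terms this is just
\begin{equation*}
\frac{q^m}{(q;q)_m}\,(q;q)_\infty=q^m\prod_{j>m}(1-q^j),
\end{equation*}
and the involution below realizes this cancellation combinatorially.

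\emph{Step 3 (involution).} Let $J$ be the set consisting of the elements of $S$ that are $\le m$ together with the part sizes of $\tau_0$. If $J$ is nonempty, let $j=\min J$. If $j\in S$, remove $j$ from $S$ and add one part $j$ to $\tau_0$. Otherwise, remove one copy of $j$ from $\tau_0$ and put it into $S$. This preserves the total size and changes $|S|$ by one, so it reverses the sign. The main point to check is that it is an involution. The minimum of $J$ is unchanged by the move, since $j$ stays in $J$ and nothing smaller is created. If $j$ was in $S$, it cannot also be a part of $\tau_0$ in a way that breaks distinctness of $S$, because the moved element always returns to the same place. I expect the only delicate case is when $j$ lies both in $S$ and in $\tau_0$. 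To handle it, I would fix the rule ``if $j\in S$ move it out of $S$''. Then the reverse move takes one copy of $j$ from $\tau_0$, with $j\notin S$ afterwards, so the distinctness of $S$ is preserved and the map is its own inverse.

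The fixed points are exactly the pairs with $\tau_0=\emptyset$ and $S\subseteq\{m+1,m+2,\dots\}$. For such a pair, $\pi=S\cup\{m\}$ is a partition of $n$ into distinct parts with $s(\pi)=m$. Its weight is $(-1)^{|S|}=(-1)^{\#(\pi)-1}$, and every such $\pi$ arises exactly once. This yields the left-hand side of \eqref{lemma 1 equation}.
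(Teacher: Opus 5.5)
Your proposal is correct and follows the paper's proof step for step. You conjugate, read $\binom{\nu(\mu)}{\#(\mu)-m}$ as choosing a set of distinct part sizes to strip off (leaving exactly $m$ parts), conjugate again and remove the part $m$, and then apply the same sign-reversing involution on the smallest element of $[1,m]$ lying in $S$ or among the parts of $\tau_0$, with fixed points $\{m\}\cup S$ for $S\subseteq(m,\infty)$. Your hedging about a ``delicate case'' is unnecessary: with the rule ``if $j\in S$, move it to $\tau_0$'' (which is the paper's rule), the minimum of $J$ is preserved and the involution check is immediate.
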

\begin{remark}
Identity \eqref{lemma 1 equation} is the coefficient of $c^{m}$, for $m \geq 1$, in the partition-theoretic form
\begin{align*}
\sum_{\pi\in\mathcal D(n)}(-1)^{\#(\pi)-1}\bigl(c^{s(\pi)}-1\bigr)
=\sum_{\lambda\in\mathcal P(n)}c^{\ell(\lambda)-\nu(\lambda)}(c-1)^{\nu(\lambda)}
\end{align*}
of an identity of Andrews, Garvan and Liang \cite[Theorem 3.5]{agl13}, which was recorded in this form by Dixit and Maji \cite[Equation~(2.6)]{DM}. The proofs in \cite{agl13,DM} use
$q$-series. To the best of our knowledge, Lemma \ref{lemma 1} gives the first
bijective proof of this refined identity.
\end{remark}
\begin{proof}
We start by simplifying RHS of \eqref{lemma 1 equation}. Let $\mu$ be the conjugate of $\lambda$. Then $\ell (\lambda) = \#(\mu)$ and $\nu (\lambda) = \nu (\mu)$. Thus, we have 
\begin{align*}
\sum_{\lambda\in\mathcal P(n)}(-1)^{\ell(\lambda)-m}\binom{\nu(\lambda)}{\ell(\lambda)-m} = \sum_{\mu\in\mathcal P(n)}(-1)^{\#(\mu)-m}\binom{\nu(\mu)}{\#(\mu)-m}. 
\end{align*}
Let $B_{\mu}$ be the subset of set of distinct parts of $\mu$ with $|B_{\mu}|=\#(\mu)-m$.
Then $\binom{\nu(\mu)}{\#(\mu)-m}$ counts the total number of such subsets. 
This gives 
\begin{align}\label{wieghted_sum_of_RHS}
\sum_{\mu\in\mathcal P(n)}(-1)^{\#(\mu)-m}\binom{\nu(\mu)}{\#(\mu)-m} =   \sum_{\mu\in\mathcal P(n)}(-1)^{|B_{\mu}|}\binom{\nu(\mu)}{|B_{\mu}|}= \sum_{(\mu, B_{\mu})} (-1)^{|B_{\mu}|},
\end{align}
where the sum runs over the ordered pair $(\mu, B_{\mu})$, which counts the number of subsets $B_{\mu}$ corresponding to each partition $\mu$ with weight $(-1)^{|B_{\mu}|}$ . For all $b \in B_{\mu}$, remove one copy of each $b$ from $\mu$ and call it $\kappa$. Then one can see that $
|\kappa|+\sum_{b \in B_{\mu}}b = n$ and $\#(\kappa) = m.$ It is easy to check that there is a one-one correspondence between pairs $(\mu, B_{\mu})$ and  $(\kappa, B)$, where $\kappa$ is any partition with exactly $m$ part sizes and $B$ is any finite set of positive integers with $|\kappa| + \sum_{b \in B}b= n.$ Adding one copy of each $b \in B$ to $\kappa$ recovers $\mu$.  Again we conjugate $\kappa$ and then remove one copy of $m$ and call it $\rho$. Then $\rho$ is a partition with all parts $\leq m$ and  $|\rho|+m+\sum_{b \in B_{\mu}}b=n$. Next step is to split the set $B_{\mu}$ as $B_{\mu}=S\sqcup T$ with $S=B_{\mu}\cap[1,m]$ and $T=B_{\mu}\cap(m,\infty)$. Thus, the weight in the last sum in \eqref{wieghted_sum_of_RHS} is $(-1)^{|S|}(-1)^{|T|}$.

Since all parts of $\rho$ are $\leq m$ allows us to fix $T$ and work with pair $(\rho, S)$. Let $i$ be the smallest integer in $[1,m]$ that lies either in $S$ or is a part of $\rho$. If none of the above happens, the pair is $(\emptyset,\emptyset)$, which is fixed. Thus, we want to show that for $(\rho, S) \neq (\emptyset,\emptyset)$, This pair must be paired with another pair. If $i\in S$, remove $i$ from $S$ and add a part $i$ to $\rho$. If $i\notin S$, so $i$ is a part of $\rho$, remove one part $i$ from $\rho$ and put $i$ into $S$. This preserves $|\rho|+\sum_{s \in S} s$, changes $|S|$ by one, and hence the associated weight is  $(-1)^{|S|-1}(-1)^{|T|}$.
Consequently only the terms with $|\rho|+\sum_{s \in S}s =0$, i.e.\ $\sum_{t \in T}t=n-m$, survive, each with weight $(-1)^{|T|}$. The sets $T\subset(m,\infty)$ with $\sum_{t \in T}t=n-m$ correspond exactly to $\pi=\{m\}\cup T\in\mathcal D(n)$ with $s(\pi)=m$ and $|T|= \#(\pi)-1$. This is the left side of \eqref{lemma 1 equation}.
\end{proof}
\begin{proof}[Theorem \ref{general_pk(n)}][]
Multiplying both sides of \eqref{lemma 1 equation} by $\binom{m+k-1}{k}$ and summing over all $m$, we obtain
\begin{align*}
\sum_{\pi \in \mathcal{D}(n)} (-1)^{ \# (\pi) -1} \binom{s(\pi)+k-1}{k} &= \sum_{m\geq 1} \sum_{\lambda\in \mathcal{P}(n)} (-1)^{\ell(\lambda)-m} \binom{\nu(\lambda)} {\ell(\lambda)-m} \binom{m+k-1}{k} \\
&= \sum_{\lambda\in P(n)} \sum_{m=1}^{\ell(\lambda)} (-1)^{\ell(\lambda)-m} \binom{\nu(\lambda)}
{\ell(\lambda)-m} \binom{m+k-1}{k}.
\end{align*}
Replacing $m$ by $\ell(\lambda)-m$, one has
\begin{align}\label{final_LHS}
\sum_{\pi \in \mathcal{D}(n)} (-1)^{ \# (\pi) -1} \binom{s(\pi)+k-1}{k} = \sum_{\lambda\in \mathcal{P}(n)} \sum_{m=0}^{\nu(\lambda)} (-1)^m \binom{\nu(\lambda)}{m} \binom{\ell(\lambda)-m+k-1}{k}.
\end{align}
The sum over $m$ runs till $\nu(\lambda)$ as $\binom{\nu(\lambda)}{m}$ vanishes for $m > \nu(\lambda)$. When $\nu(\lambda)=\ell(\lambda)$ the sum acquires the extra term $m=\ell(\lambda)$, which vanishes since $\binom{k-1}{k}=0$ for $k\ge 1$. Hence, the sum may be taken over $0\le m\le \nu(\lambda)$.
Let
\begin{align*}
f(m)=\binom{\ell(\lambda)-m+k-1}{k}.
\end{align*}
Then $f(m)$ is a polynomial of degree $k$ in $m$. Here we utilize the fact that if $f(x)$  is a polynomial of degree $d$, then
\begin{align*}
\sum_{j=0}^{n}(-1)^j
\binom{n}{j}f(j)=0,
\quad\text{if }n>d.
\end{align*}
Thus, \eqref{final_LHS} survives for $\nu(\lambda) \leq k$. This gives 
\begin{align}\label{general k first form}
\sum_{\pi \in \mathcal{D}(n)} (-1)^{ \# (\pi) -1} \binom{s(\pi)+k-1}{k} = \sum_{\substack{\lambda\in \mathcal{P}(n) \\ \nu(\lambda) \leq k}} \sum_{m=0}^{\nu(\lambda)} (-1)^m \binom{\nu(\lambda)}{m} \binom{\ell(\lambda)-m+k-1}{k}.
\end{align}
Now utilizing 
\begin{align*}
\sum_{m=0}^{v} (-1)^m \binom{v}{m} \binom{y-m}{k} = \binom{y-v}{k-v},
\end{align*}
with $v=\nu(\lambda)$ and $y=\ell(\lambda)+k-1$ in the right-hand side of \eqref{general k first form}, we obtain
\begin{align*}
\sum_{\pi \in \mathcal{D}(n)} (-1)^{ \# (\pi) -1} \binom{s(\pi)+k-1}{k} = \sum_{\substack{\lambda\in \mathcal{P}(n) \\ \nu(\lambda) \leq k}} \binom{\ell(\lambda)+k-\nu(\lambda)-1}{k-\nu(\lambda)}.
\end{align*}
Isolating the term corresponding to $\nu(\lambda)=k$ on the right-hand side of the above equation yields \eqref{general k identity}. This completes the proof of Theorem \ref{general_pk(n)}.
\end{proof}
\begin{proof}[Corollary \ref{main_corollary}][]
Letting  $k=2$ in \eqref{general k identity}, we obtain
\begin{align}\label{k=2_first_identity}
p^{(2)}(n) &=\sum_{\pi\in\mathcal{D}(n)} (-1)^{\#(\pi)-1}\binom{s(\pi)+1}{2} - \sum_{\substack{\lambda\in\mathcal{P}(n) \\ \nu(\lambda)=1}}\ell(\lambda) \nonumber \\
&=\sum_{\pi\in\mathcal{D}(n)} (-1)^{\#(\pi)-1}\binom{s(\pi)+1}{2} - \sigma(n).
\end{align}
Here, in the last step, we have utilized the fact that partitions with one distinct part size are $(d, d, …, d)$ with $d \mid n$, and their largest part is $d$. Utilize \eqref{BS-general} with $z=1$ to write $\sigma(n) = \sum_{\pi\in\mathcal{D}(n)} (-1)^{\#(\pi)-1}\sum_{j=1}^{s(\pi)} \bigl(\ell(\pi)-s(\pi)+j\bigr)$. Next, using the identity
\begin{equation*}
\sum_{j=1}^{s(\pi)} \bigl(\ell(\pi)-s(\pi)+j\bigr) - \binom{s(\pi)+1}{2} = s(\pi)\bigl(\ell(\pi)-s(\pi)\bigr),
\end{equation*}
on the RHS of \eqref{k=2_first_identity},  we obtain \eqref{Analogue of BS equation}. This completes the proof of Corollary \ref{main_corollary}
\end{proof}

\section{Concluding remarks}
We now explore several consequences of the identity \eqref{lemma 1 equation}. Multiplying both sides of \eqref{lemma 1 equation} by an arbitrary function $w(m)$ and summing over $m\geq 1$, we obtain the following identity, where we adopt the convention $w(0)=0$:
\begin{equation}\label{weight_general}
\sum_{\pi\in\mathcal D(n)}(-1)^{\#(\pi)-1}\,w(s(\pi))
=\sum_{\lambda\in\mathcal P(n)}\sum_{j=0}^{\nu(\lambda)}
(-1)^j\binom{\nu(\lambda)}{j}\,w(\ell(\lambda)-j).
\end{equation}
In particular, when $w(m)$ is a polynomial of degree $k$, the inner alternating sum vanishes for every partition $\lambda$ with $\nu(\lambda)>k$. Consequently, only partitions with at most $k$ distinct part sizes contribute to the right-hand side. Theorem~\ref{general_pk(n)} follows by taking $w(m)=\binom{m+k-1}{k}$. Other choices of $w(m)$ yield further identities. For instance, choosing $w(m)=m^k c^m$, where $k,c\in\mathbb C$, recovers \cite[Theorem 2.3]{ABEM24}. Similarly, taking $w(m)=\sum_{j=1}^{m}j^k c^j$, with $k,c\in\mathbb C$, yields \cite[Theorem 2.6]{ABEM24}. These examples illustrate the versatility of the identity \eqref{weight_general} and suggest that a wide range of partition identities can be derived by suitable choices of the weight function $w(m)$. It would be interesting to explore further choices of $w(m)$ and investigate the new combinatorial and arithmetic identities that may arise from this general framework.

\section*{Acknowledgements}
The authors wish to thank Dr. Pramod Eyyunni and Dr. Bibekananda Maji for giving useful suggestions and fruitful discussions.  The first author's research is funded by National Institute of Science Education and Research (NISER), Bhubaneswar. The second author is grateful to National Board of Higher Mathematics (NBHM) for providing Post Doctoral fellowship.  We sincerely thank NISER Bhubaneswar and IISc Bangalore for providing research-friendly environment. 
%\section{Data availability statement}
%The authors declare that the manuscript has no associated data. 
%\section{Statement of Conflict of Interest}
%On behalf of all authors,  the corresponding author states that there is no conflict of interest.  


\begin{thebibliography}{00}
 \bibitem{ABEM24} A.~Agarwal, S.~C.~Bhoria, P.~Eyyunni, and B.~Maji, \emph{Bressoud--Subbarao type weighted partition identities for a generalized divisor function}, Ann. Comb., \textbf{28} (2024), 555--574. 
 
 \bibitem{agl13} G.~E.~Andrews, F.~G.~Garvan,  and J. Liang, \emph{Self-conjugate vector partitions and the parity of the \textup{spt}-function}, Acta Arith., \textbf{158} (2013), 199--218.
 
 
 \bibitem{BEM21}
 S. ~C.~Bhoria,  P.~Eyyunni,  and B.~Maji,  \emph{Generalization of five $q$-series identities of Ramanujan and unexplored weighted partition identities},  Ramanujan J.,   {\bf 58} (2022),  435--462.  
 
 
 \bibitem{BS84}
 D.~Bressoud and M.~Subbarao, \emph{On Uchimura's connection between partitions and the number of divisors}, Canad.~Math.~Bull.,~\textbf{27} (1984), 143--145.
 
% \bibitem{overp}
%S.~Corteel and J.~Lovejoy, \emph{Overpartitions}, Trans. Amer. Math. Soc.,~\textbf{356} No. 4 (2004), 1623--1635.
 
%\bibitem{dilcher} 
%K.~Dilcher, \emph{Some $q$-series identities related to divisor functions}, Discrete Math., {\bf 145} (1995), 83--93.  
 
%  \bibitem{cdg}
%  S.H.~Chan, A.~Dixit and F.G.~Garvan, \emph{Rank-crank type PDEs and generalized Lambert series identities}, Ramanujan J.~\textbf{31}, No. 1-2 (2013), 163--189 (Special issue in honor of Mourad Ismail and Dennis Stanton).
 
%  \bibitem{chern}
%  S.~Chern, \emph{On a conjecture of George Beck}, Int.~J.~Number~Theory~\textbf{14} No. 3 (2018), 647--651. 

\bibitem{DM}
 A.~Dixit and B.~Maji, \emph{Partition implications of a three-parameter $q$-series identity}, Ramanujan J.,  {\bf 52} (2020), 323--358.

% \bibitem{DEMS} A.~Dixit, P.~Eyyunni, B.~Maji and G.~Sood, \emph{Untrodden pathways in the theory of the restricted partition function p(n,N)},  J.  Combin.  Theory Ser.  A {\bf 180} (2021), 105423.
 
% \bibitem{EMS} P.~Eyyunni, B.~Maji and G.~Sood, \emph{An Inequality between finite analogues of rank and crank moments},  Int.  J.  Number Theory,  {\bf 17},  405--423  (2021).({\bf Special issue in honor of Bruce C.  Berndt}).

%  \bibitem{dixityee}
%  A.~Dixit and A.~J.~Yee, \emph{Generalized higher order spt-functions}, Ramanujan J.~\textbf{31}, No. 1-2 (2013), 191--212 (Special issue in honor of Mourad Ismail and Dennis Stanton).
 
 %\bibitem{fine} N.~J.~Fine, \emph{Basic Hypergeometric Series and Applications}, Mathematical Surveys and Monographs, Amer. Math. Soc., 1989. 
 
 \bibitem{ffw95} R.~Fokkink, W.~Fokkink and Z.~B.~Wang, \emph{A relation between partitions and the number of divisors}, Amer. Math. Monthly, {\bf 102} (1995), 345--347.
 
 %\bibitem{fulascoux}
 %A.~M.~Fu and A.~Lascoux, \emph{$q$-Identities related to overpartitions and divisor functions}, Electron.~J.~Combin.~\textbf{12} (2005), \#R38.
 
 % \bibitem{garvan0}
%  F.~G.~Garvan, \emph{Weighted partition identities and divisor sums}, Talk, AMS special Session on `Partition Theory and Related Topics', Joint Mathematics Meeting, Atlanta, January 6, 2017.
 
% \bibitem{garvan1} F.~G.~Garvan, \emph{Weighted partition identities and divisor sums}, Ch. 12 in Frontiers in Orthogonal Polynomials and $q$-Series, Contemporary Mathematics and Its Applications: Monographs, Expositions and Lecture Notes: Vol. 1, M.~Z.~Nashed and X.~Li, eds., World Scientific, 2018, pp. 239--249.
 
%\bibitem{GasperRahman} G.~Gasper, M.~Rahman, Basic Hypergeometric series, Second Edition, (Encyclopedia of Mathematics and its applications), 2004. 
 
%\bibitem{glaisher} J.~W.~L.~Glaisher, \emph{On the representations of a number as the 
%sum of two, four, six, eight, ten, and twelve squares}, Quart. J. Math. ,  {\bf 38} (1907), 1--62.

%\bibitem{guozeng2015} V.~J.~W.~Guo and J.~Zeng, \emph{Basic and bibasic identities related to divisor functions}, 
%J. Math. Anal. Appl.,  {\bf 431} (2015), 1197--1209.
% 
% \bibitem{GK21} R.~Gupta and R.~Kumar,  \emph{On some $q$-series identities related to a generalized divisor function and their implications},  Discrete Math.,  {\bf 344} (2021),  112559.  
 
 
%\bibitem{Rgupta} R.~Gupta, \emph{On sum-of-tails identities}, arXiv:2002.00447,  2020. 
%  \bibitem{guozhang}
%   V.~J.~W.~Guo and C.~Zhang, \emph{Some further q-series identities related to divisor functions}, Ramanujan J.~\textbf{25} No. 3 (2011), 295--306. 
 
% \bibitem{hamme} L.~van Hamme, \emph{Problem $6407$}, Amer. Math. Monthly {\bf 89} No. 9 (1982), 703--704.
 
%  \bibitem{ismailstantonanncomb}
%  M.~E.~H.~Ismail and D.~Stanton, \emph{Some combinatorial and analytical identities}, Ann. Comb.~\textbf{16} No. 4 (2012), 755--771.
 
%  \bibitem{macmahon}
%  P.A.~Macmahon, \emph{Combinatory Analysis}, Vol.~II, Cambridge University Press, Cambridge, 1915-1916, reissued, Chelsea, 1960.
 
%  \bibitem{merca}
%  M.~Merca, \emph{A new look on the generating function for the number of divisors}, J.~Number Theory~\textbf{149} (2015), 57--69.
 
% \bibitem{kluyver} J.~C.~Kluyver, Vraagstuk XXXVII (Solution by S.C. van Veen), Wiskundige Opgaven (1919), 92--93.
 
 %\bibitem{patkowskijp}
 %A.~E.~Patkowski, \emph{A formula for the sum of three triangular numbers and some explorations}, JP J. Algebra, Number Theory Appl.~\textbf{10} No. 2 (2008), 185--191.
 
%  \bibitem{patkowskicm}
%  A.~E.~Patkowski, \emph{Divisors, partitions and some new $q$-series identities}, Colloq. Math.~\textbf{117} No. 2 (2009), 289--294.
 
 %\bibitem{prodingerrice}
 %H.~Prodinger, \emph{Some applications of the $q$-Rice formula}, Random Structures Algorithms~\textbf{19} (2001), 552--557.
 
% \bibitem{ramanujanoriginalnotebook2}
% S.~Ramanujan, \emph{Notebooks of Srinivasa Ramanujan, Vol. II}, Tata Institute of Fundamental Research, Bombay, 1957, 393 pp.
 
% \bibitem{lnb}
% S.~Ramanujan, \emph{The Lost Notebook and Other Unpublished  Papers}, Narosa, New Delhi, 1988.
 
% \bibitem{ramanujantifr}
%  S.~Ramanujan, \emph{Notebooks of Srinivasa Ramanujan, Vol. II}, Tata Institute of Fundamental Research, Mumbai, 2012.

% \bibitem{sloane}
% N.~J.~A.~Sloane, http://oeis.org/A228441.
 
\bibitem{uchimura81} K.~Uchimura, \emph{An identity for the divisor generating function arising from sorting theory}, J. Combin.
 Theory Ser. A, {\bf 31} (1981), 131--135. 
 
%\bibitem{uchimura87} K.~Uchimura,  \emph{Divisor generating functions and insertion into a heap},  Discrete Appl. Math.,  {\bf 18} (1987), 73--81.   
 
%  \bibitem{wangyee}
%  L.~Wang and A.~J.~Yee, \emph{Some Hecke-Rogers type identities}, preprint.
 
 
 \end{thebibliography}
\end{document}